\newtheorem{thm}{Theorem}[section]
\newtheorem{prop}[thm]{Proposition}
\newtheorem{lem}[thm]{Lemma}
\newtheorem{cor}[thm]{Corollary}
\theoremstyle{definition}
\theoremstyle{remark}
\numberwithin{equation}{section}
\newcommand{\bZ}{\mathbb{Z}}
\newcommand{\bN}{\mathbb{N}}
\newcommand{\Perm}{\mathrm{Perm}}
\newcommand{\Hol}{\mathrm{Hol}}
\newcommand{\NHol}{\mathrm{NHol}}
\newcommand{\Aut}{\mathrm{Aut}}
\newcommand{\Norm}{\mathrm{Norm}}
\newcommand{\ep}{\epsilon}
\newcommand{\mmod}{\hspace{-3.5mm}\mod}
\newcommand{\pmmod}{\hspace{-2.5mm}\pmod}
\begin{document}

\large 

\title[The multiple holomorph of a semidirect product of groups]{The multiple holomorph of a semidirect product\\ of groups having coprime exponents}

\author{Cindy (Sin Yi) Tsang}
\address{School of Mathematics (Zhuhai), Sun Yat-Sen University, Zhuhai, Guangdong, China}
\email{zengshy26@mail.sysu.edu.cn}\urladdr{http://sites.google.com/site/cindysinyitsang/} 

\date{\today}

\maketitle

\vspace{-5mm}

\begin{abstract}Given any group $G$, the multiple holomorph $\mathrm{NHol}(G)$ is the normalizer of the holomorph $\mathrm{Hol}(G) = \rho(G)\rtimes \mathrm{Aut}(G)$ in the group of all permutations of $G$, where $\rho$ denotes the right regular representation. The quotient $T(G) = \mathrm{NHol}(G)/\mathrm{Hol}G)$ has order a power of $2$ in many of the known cases, but there are exceptions. We shall give a new method of constructing elements (of odd order) in $T(G)$ when $G=A\rtimes C_d$, where $A$ is a group of finite exponent coprime to $d$ and $C_d$ is the cyclic group of order $d$.
\end{abstract}

\tableofcontents

\vspace{-10mm}

\section{Introduction}

Let $G$ be a group and write $\Perm(G)$ for the group of all permutations of $G$. A subgroup $N$ of $\Perm(G)$ is said to be \emph{regular} if its action on $G$ is regular. For example, both $\lambda(G)$ and $\rho(G)$ are regular subgroups of $\Perm(G)$, where 
\[ \begin{cases}
\lambda:G\longrightarrow\Perm(G);\hspace{1em}\lambda(\sigma) = (x\mapsto \sigma x)\\
\rho: G\longrightarrow\Perm(G);\hspace{1em}\rho(\sigma) = (x\mapsto x\sigma^{-1})
\end{cases}\]
are the left and right regular representations of $G$. Recall that the \emph{holomorph} of $G$ is defined to be the subgroup
\[ \Hol(G) = \rho(G)\rtimes \Aut(G)\]
of $\Perm(G)$, or alternatively, it is easy to check that
\[ \Norm_{\Perm(G)}(\lambda(G)) = \Hol(G) = \Norm_{\Perm(G)}(\rho(G)).\]
The \emph{multiple holomorph} of $G$ in turn is defined to be the normalizer
\[ \NHol(G) = \Norm_{\Perm(G)}(\Hol(G)).\]
We are interested in the quotient group
\[ T(G) = \frac{\NHol(G)}{\Hol(G)}.\]
The study of $T(G)$ was initiated by G. A. Miller \cite{Miller}, and was motivated by the fact that $T(G)$ acts regularly (via conjugation) on the regular subgroups $N$ of $\Perm(G)$ for which $N\simeq G$ and $\Norm_{\Perm(G)}(N) = \Hol(G)$. Let us note that $\pi_{-1}$ always lies in $\NHol(G)$, where
\[ \pi_{-1}:G\longrightarrow G;\hspace{1em}\pi_{-1}(x) = x^{-1}\]
is the inverse map. The subgroup $\langle\pi_{-1}\Hol(G)\rangle$ has order $1$ or $2$, and its elements correspond to the regular subgroups $\lambda(G),\rho(G)$. Clearly $\langle\pi_{-1}\Hol(G)\rangle$ is trivial precisely when $G$ is abelian. This corresponds to the fact that $\lambda(G)$ and $\rho(G)$ coincide exactly when $G$ is abelian.

\vspace{1mm}

The structure of $T(G)$ has been computed for various families of groups $G$; see \cite{Caranti1,Caranti2,Caranti3,Kohl NHol,Mills,Miller,Tsang NHol,Tsang squarefree}. In many of the known cases $T(G)$ turns out to have order a power of $2$. It is in fact elementary $2$-abelian for the following groups $G$.
\begin{enumerate}[$\bullet$]
\item Finitely generated abelian groups $G$; see \cite{Miller,Mills,Caranti1}.
\item Finite dihedral or dicyclic groups $G$; see \cite{Kohl NHol}.
\item Finite perfect groups $G$ with trivial center; see \cite{Caranti2}.
\item Finite quasisimple or almost simple groups $G$; see \cite{Tsang NHol}.
\item Finite groups $G$ of squarefree order; see \cite{Tsang squarefree}.
\end{enumerate}
Nevertheless, there are groups $G$ for which $T(G)$ has elements of odd order, as shown by A. Caranti \cite{Caranti3}, whose result was recently extended by the author in \cite{Tsang squarefree}. More specifically, observe that for each $\ell\in\bZ$, the $\ell$th power map
\[ \pi_\ell: G\longrightarrow G;\hspace{1em}\pi_\ell(x) = x^\ell\]
is bijective when $G$ has finite exponent coprime to $\ell$. In general $\pi_\ell$ need not lie in $\NHol(G)$, unless $\ell\equiv\pm1$ (mod $\exp(G)$). But under suitable conditions, it does lie in $\NHol(G)$, and  $\pi_\ell\Hol(G)$ defines an element of $T(G)$. The next result is \cite[Theorem 3.1]{Tsang squarefree}, which generalizes \cite[Proposition 3.1]{Caranti3}.

\begin{thm}\label{old thm}Let $G$ be a finite $p$-group of nilpotency class $2\leq n_c\leq p-1$. Let $Z(G)$ and $\gamma_3(G)$, respectively, denote its center and the third term in its lower central series. Let $r\geq1$ and $t\geq0$ be the integers such that
\[ \exp(G/Z(G)) = p^r\mbox{ and }\exp(\gamma_3(G)) = p^t.\]
Then, for all $\ell\in\bZ$ coprime to $p$ with $\ell\equiv1\pmod{p^t}$, we have $\pi_\ell\in\NHol(G)$, and $\pi_\ell\Hol(G)$ is trivial in $T(G)$ exactly when $\ell\equiv1\pmod{p^r}$.\end{thm}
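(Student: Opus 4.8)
The plan is to work inside $G$ itself, using the Hall--Petrescu collection formula (legitimate because $n_c\le p-1$), with the Lazard Lie ring $L$ of $G$ hovering in the background as a conceptual check --- on $L$ the map $\pi_\ell$ is simply multiplication by $\ell$, while $\Aut$, $Z(\cdot)$ and the lower central terms are unchanged. Note first that $\pi_\ell$ is a bijection fixing $e$ (since $\gcd(\ell,\exp G)=1$) and that it commutes with every element of $\Aut(G)$, because automorphisms commute with power maps; in particular $\pi_\ell$ centralises $\Aut(G)$. Two reductions follow. Since $\Hol(G)=\langle\rho(G),\Aut(G)\rangle$ and the congruence $\ell\equiv1\pmod{p^t}$ passes to $\ell^{-1}\bmod\exp(G)$, to prove $\pi_\ell\in\NHol(G)$ it suffices to prove that $\pi_\ell\rho(\sigma)\pi_\ell^{-1}\in\Hol(G)$ for every $\sigma\in G$: then conjugation by $\pi_\ell$, being an automorphism of $\Perm(G)$ that fixes $\Aut(G)$ pointwise and carries $\rho(G)$ into $\Hol(G)$, maps $\Hol(G)=\langle\rho(G),\Aut(G)\rangle$ into $\Hol(G)$, and by symmetry ($\ell\leftrightarrow\ell^{-1}$) the reverse inclusion holds as well. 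And since the stabiliser of $e$ in $\Hol(G)$ is exactly $\Aut(G)$, the coset $\pi_\ell\Hol(G)$ is trivial in $T(G)$ if and only if $\pi_\ell\in\Aut(G)$.

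For $\pi_\ell\in\NHol(G)$: with $m$ an inverse of $\ell$ modulo $\exp(G)$ one has $(\pi_\ell\rho(\sigma)\pi_\ell^{-1})(x)=(x^m\sigma^{-1})^\ell$, and Hall--Petrescu gives $(x^m\sigma^{-1})^\ell=x^{m\ell}\sigma^{-\ell}\prod_{i=2}^{n_c}c_i^{\binom{\ell}{i}}$ with $c_i\in\gamma_i(\langle x,\sigma\rangle)$. Now $x^{m\ell}=x$; for $2\le i\le p-1$ the integer $\binom{\ell}{i}$ is divisible by $p^t$ (numerator divisible by $\ell-1$, denominator prime to $p$); and for $i\ge 3$ one has $c_i\in\gamma_3(G)$, which has exponent $p^t$, so those factors die. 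There remains $(x^m\sigma^{-1})^\ell=x\,\sigma^{-\ell}\,[\sigma^{-1},x^m]^{\binom{\ell}{2}}$, and a further reduction modulo $\gamma_3(G)$ (every discrepancy there carries the factor $\binom{\ell}{2}\equiv0\pmod{p^t}$ and so vanishes) rewrites this as $\rho(\sigma^{\ell})$ followed by an inner automorphism $\mathrm{conj}_{\tau}$ with $\tau\in\langle\sigma\rangle$ (the relevant exponent being $\equiv0\pmod{p^t}$). In particular it lies in $\Hol(G)$, which by the first reduction proves $\pi_\ell\in\NHol(G)$.

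For the triviality criterion it remains, by the second reduction, to show $\pi_\ell\in\Aut(G)$ if and only if $\ell\equiv1\pmod{p^r}$. The crucial auxiliary fact is that $\exp\gamma_2(G)=\exp(G/Z(G))=p^r$ for $G$ of class $<p$: in any nilpotent Lie ring $L$ one has $\exp[L,L]=\exp(L/Z(L))$ --- one divisibility because each generator $[x,y]$ of $[L,L]$ is killed by the order of $x$ modulo $Z(L)$, the other because $(\exp[L,L])\cdot\bar x$ centralises $L$ for every $\bar x$ --- and Lazard transports this to $G$ (alternatively one argues it in $G$ by collection). Granting it: if $\ell\equiv1\pmod{p^r}$ then in $(xy)^\ell=x^\ell y^\ell\prod c_i^{\binom{\ell}{i}}$ every $c_i$ lies in $\gamma_2(G)$, of exponent $p^r$, and $p^r\mid\binom{\ell}{i}$ for $2\le i\le n_c$, so $\pi_\ell$ is a homomorphism, hence an automorphism; conversely, $\pi_\ell\in\Aut(G)$ means multiplication by $\ell$ is a Lie automorphism of $L$, i.e. $\ell(\ell-1)[L,L]=0$, whence $\ell\equiv1\pmod{\exp[L,L]}=\pmod{p^r}$ because $\ell$ is prime to $p$. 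The step I expect to cost the most care is the one in the middle paragraph: verifying that every weight-$\ge 3$ commutator correction in the collection formula is annihilated under precisely the hypothesis $\ell\equiv1\pmod{p^t}$. This is exactly where $n_c\le p-1$ is indispensable --- it makes the product finite and the denominators $i!$ invertible --- and where quietly passing to the Lie ring, on which $\pi_\ell$ is linear, is the cleanest way to keep the bookkeeping honest.
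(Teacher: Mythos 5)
Theorem~\ref{old thm} is not proved in this paper at all: it is quoted from \cite[Theorem 3.1]{Tsang squarefree} (generalizing \cite[Proposition 3.1]{Caranti3}), so there is no in-paper proof to compare you against; I can only judge your argument on its own merits, and on those terms it is correct. Your two reductions are sound: $\pi_\ell$ centralizes $\Aut(G)$, $\pi_\ell^{-1}=\pi_m$ with $m$ an inverse of $\ell$ modulo $\exp(G)$ (so again coprime to $p$ and $\equiv1\pmod{p^t}$), hence it suffices to conjugate $\rho(\sigma)$ into $\Hol(G)$; and since $\pi_\ell$ fixes $1_G$ and the stabilizer of $1_G$ in $\Hol(G)$ is $\Aut(G)$, triviality of the coset is equivalent to $\pi_\ell\in\Aut(G)$. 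The Hall--Petrescu computation with the two divisibility facts ($p^t\mid\binom{\ell}{i}$ for $2\le i\le p-1$, and $\exp\gamma_3(G)=p^t$) is the right engine, and the auxiliary identity $\exp\gamma_2(G)=\exp(G/Z(G))$ for class $<p$ (cleanly seen on the Lazard Lie ring, where also the converse direction $\pi_\ell\in\Aut(G)\Rightarrow\ell(\ell-1)[L,L]=0$ is immediate) correctly yields the criterion $\ell\equiv1\pmod{p^r}$. The one step you leave compressed --- as you yourself flag --- is the passage from $x\,\sigma^{-\ell}\,c_2^{\binom{\ell}{2}}$ to $\rho(\sigma^\ell)\circ\mathrm{conj}_\tau$ with $\tau\in\langle\sigma\rangle$: here $c_2$ is only congruent to $[\sigma^{-1},x^m]$ modulo $\gamma_3(G)$, and the further rewritings ($[\sigma^{-1},x^m]^{N}$ into $[x,\sigma]^{mN}$, then into $[x,\sigma^{mN}]$ with $N=\binom{\ell}{2}$, and moving this factor past $\sigma^{-\ell}$) each produce $\gamma_3$-corrections whose exponents are not automatically multiples of $N$; the correct justification is that in every such collection identity the weight-$\ge3$ factors carry exponents that are integer combinations of $\binom{M}{j}$ with $p^t\mid M$ and $2\le j\le p-1$ (the linear term vanishes because the identity is exact at $M=1$), so they are divisible by $p^t$ and die in $\gamma_3(G)$ --- this is exactly where $n_c\le p-1$ is used a second time. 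With that bookkeeping written out one lands on $\pi_\ell\rho(\sigma)\pi_\ell^{-1}=\rho(\sigma^\ell)\,\mathrm{conj}_{\sigma^{-m\binom{\ell}{2}}}\in\Hol(G)$, confirming your parenthetical, so I see no genuine gap --- only a step that must be expanded before the argument is complete.
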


Theorem~\ref{old thm} may be used to produce examples of finite $p$-groups $G$ and in particular nilpotent groups $G$ for which $T(G)$ has elements of odd order. As noted in \cite[p. 4]{Tsang squarefree}, there are non-nilpotent (but solvable) groups $G$ for which $T(G)$ has elements of odd order. By \cite[Example 3.7]{Tsang squarefree}, there is a group $G$ of order $3^6$ for which $T(G)$ has order $18$, but the $\pi_\ell$ for $\ell$ coprime to $3$ only give rise to a subgroup of order $2$ in $T(G)$. This means that the elements of odd order in $T(G)$ do not always come from these power maps $\pi_\ell$.

%Theorem~\ref{old thm} implies that $T(G)$ is not a $2$-group for all finite $p$-groups $G$ of nilpotency class $2$ when $p$ is an odd prime such that $p-1$ is not a power of $2$; see \cite[Subsection 3.3.1]{Tsang squarefree} for a discussion on the case when the nilpotency class is larger than $2$. In \cite[Example 3.7]{Tsang squarefree}, it was also noted that  there is a group $G$ of order $3^6$ and nilpotency class $3$ for which $T(G)$ has order $18$, but the $\pi_n$ for $n$ coprime to $3$ only give rise to the subgroup $\langle\pi_{-1}\Hol(G)\rangle$ of order $2$ in $T(G)$. This means that the elements of odd order in $T(G)$ do not always come from these power maps $\pi_n$.

\vspace{1mm}

In this paper, we give a new method of constructing elements in $T(G)$, and as an application, give a simple way of creating non-nilpotent (but solvable) groups $G$ for which $T(G)$ has elements of odd order. More specifically, take $G = A\rtimes C_d$, where $A$ is a group and $C_d$ is the cyclic group of order $d$. Let
\[d = p_1^{n_1}\cdots p_r^{n_r},\mbox{ where $p_1,\dots,p_r$ are distinct and }n_1,\dots,n_r\in\bN,\]
be the prime factorization of $d$. Then, alternatively, we have
\begin{equation}\label{G def} G = A\rtimes \left(\langle\tau_1\rangle\times\cdots\times\langle\tau_r\rangle\right),\end{equation}
where for each $1\leq s\leq r$, the element $\tau_s$ has order $p_s^{n_s}$, and write $\psi_s$ for the automorphism on $A$ such that $\tau_s a\tau_s^{-1} = \psi_s(a)$ for all $a\in A$. Notice that $\psi_s$ has order dividing $p_s^{n_s}$. Let us also fix a number $k_s = 1 + u_sp_s^{m_s}$, where
\[ u_s,m_s\in\bN\mbox{ are such that $p_s\nmid u_s$ and $m_s\leq n_s$, with $m_s\geq2$ if $p_s=2$}.\]
For $x\in\bN$ and $i\in\bN_{\geq0}$, define
\[ S(x,i) = 1+x+\cdots+x^{i-1},\mbox{ with }S(x,0) = 0.\]
Then, as we shall see in Proposition~\ref{bijective prop}, the map
\[ \pi : G\longrightarrow G;\hspace{1em}\pi(a\tau_1^{i_1}\cdots\tau_r^{i_r}) = a\tau_1^{S(k_1,i_1)}\cdots \tau_r^{S(k_r,i_r)},\]
where $a\in A$ and the $i_1,\dots,i_r$ are taken to be non-negative, is a well-defined bijection. For $\theta\in\Aut(G)$ and for each $1\leq s\leq r$, clearly we may write
\begin{equation}\label{theta}\theta(\tau_s) = c_s\tau_s^{\theta_s},\mbox{ where $c_s\in A$ and $\theta_s\in\bZ$}. \end{equation}
Let us define $\ep_s = 1$ if $p_s=2$, and $\ep_s=0$ if $p_s$ is odd. We shall prove:

\begin{thm}\label{thm}Let $G$ be as in $(\ref{G def})$. In the above notation, if
\begin{enumerate}[(1)]
\item $A$ has finite exponent coprime to $p_1,\dots, p_r$,
\item $\psi_s$ has order dividing $p_s^{m_s}$ for all $1\leq s\leq r$,
\item $\theta_s\equiv1\pmod{p_s^{n_s-m_s+\ep_s}}$ for all $\theta\in\Aut(G)$ and $1\leq s\leq r$,
\end{enumerate}
then $\pi\in\NHol(G)$. If in addition
\begin{enumerate}[(1)]\setcounter{enumi}{+3}
\item $n_s\leq 2m_s-\ep_s$ for all $1\leq s\leq r$,
\end{enumerate}
then $\pi\Hol(G)$ has order $p_1^{n_1-m_1}\cdots p_r^{n_r-m_r}$ in $T(G)$.
\end{thm}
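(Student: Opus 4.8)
The plan is to check that conjugation by $\pi$ sends a generating set of $\Hol(G)$ back into $\Hol(G)$, and then to pin down exactly which powers of $\pi$ lie in $\Hol(G)$. I will use repeatedly three elementary facts about $S(x,i)$: the cocycle identity $S(x,i+j)=S(x,i)+x^{i}S(x,j)$, the relation $(x-1)S(x,i)=x^{i}-1$, and — since $k_{s}\equiv 1\pmmod{p_{s}^{m_{s}}}$ — the congruence $S(k_{s},i)\equiv i\pmmod{p_{s}^{m_{s}}}$. Note at the outset that $\pi$ is the identity on $A$ (as $S(k_{s},0)=0$), and on the torus it is built from the finite-order permutations $i\mapsto S(k_{s},i)$ of $\bZ/p_{s}^{n_{s}}$, so $\pi$ has finite order in $\Perm(G)$; it therefore suffices to prove $\pi\Hol(G)\pi^{-1}\subseteq\Hol(G)$, and since $\Hol(G)$ is generated by the $\rho(a)$ $(a\in A)$, the $\rho(\tau_{s})$, and $\Aut(G)$, it is enough to conjugate each of these into $\Hol(G)$.

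For $a\in A$, writing $y=b\tau_{1}^{j_{1}}\cdots\tau_{r}^{j_{r}}$ and moving $a^{-1}$ to the left past the torus, the automorphism of $A$ that appears is $\prod_{s}\psi_{s}^{j_{s}}$, and by (2) together with $S(k_{s},j_{s})\equiv j_{s}\pmmod{p_{s}^{m_{s}}}$ this equals $\prod_{s}\psi_{s}^{S(k_{s},j_{s})}$, which is already read off from $\pi(y)$; one gets $\pi(ya^{-1})=\pi(y)a^{-1}$, i.e.\ $\pi\rho(a)\pi^{-1}=\rho(a)$. For $\rho(\tau_{s})$, from $S(k_{s},j-1)=S(k_{s},j)-k_{s}^{\,j-1}$ and $(k_{s}-1)S(k_{s},j)=k_{s}^{\,j}-1$ a direct computation gives $\pi\rho(\tau_{s})\pi^{-1}=\rho\big(\tau_{s}^{\,k_{s}^{-1}}\big)\circ\phi_{s}$, where $\phi_{s}(z)=z\,\tau_{s}^{\,w_{s}p_{s}^{m_{s}}\nu_{s}(z)}$ with $w_{s}\equiv -k_{s}^{-1}u_{s}\pmmod{p_{s}^{n_{s}}}$ and $\nu_{s}(z)$ the exponent of $\tau_{s}$ in $z$. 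By (2) (so that rescaling the $\tau_{s}$-exponent by the unit $1+w_{s}p_{s}^{m_{s}}$ does not alter the conjugation action on $A$) the map $\phi_{s}$ is a homomorphism, and it is bijective; hence $\phi_{s}\in\Aut(G)$ and $\pi\rho(\tau_{s})\pi^{-1}\in\Hol(G)$.

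The crux is the conjugate of an arbitrary $\theta\in\Aut(G)$. Since $\exp(A)$ is finite and coprime to $d$, for $a\in A$ the image of $\theta(a)$ in $G/A\cong C_{d}$ has order dividing both $\exp(A)$ and $d$, so is trivial; thus $A$ is characteristic, $\alpha:=\theta|_{A}\in\Aut(A)$, and $\theta(\tau_{s})=c_{s}\tau_{s}^{\theta_{s}}$ as in $(\ref{theta})$. I claim $\pi\theta\pi^{-1}=\theta'$, where $\theta'$ is the map with $\theta'|_{A}=\alpha$ and $\theta'(\tau_{s})=c_{s}\tau_{s}^{\,S(k_{s},\theta_{s})}$. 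That $\theta'$ is a well-defined automorphism uses (1) and (2): because $S(k_{s},\theta_{s})\equiv\theta_{s}\pmmod{p_{s}^{m_{s}}}$ and $\mathrm{ord}(\psi_{s})\mid p_{s}^{m_{s}}$, conjugation by $\theta'(\tau_{s})$ on $A$ coincides with $\alpha\psi_{s}\alpha^{-1}$ (it does for $\theta(\tau_{s})$), and $\theta'(\tau_{s})^{p_{s}^{n_{s}}}=e$ because the ``period product'' $P_{s}:=\prod_{u=0}^{\mathrm{ord}(\psi_{s})-1}\psi_{s}^{\,u\theta_{s}}(c_{s})\in A$ satisfies $P_{s}^{\,p_{s}^{n_{s}}/\mathrm{ord}(\psi_{s})}=e$ (from $\theta(\tau_{s})^{p_{s}^{n_{s}}}=e$) and is therefore trivial, $\exp(A)$ being coprime to $p_{s}$. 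To see $\pi\theta=\theta'\pi$ one puts $\pi(\theta(a\tau_{1}^{i_{1}}\cdots\tau_{r}^{i_{r}}))$ and $\theta'(\pi(a\tau_{1}^{i_{1}}\cdots\tau_{r}^{i_{r}}))$ into normal form: the $\tau_{s}$-exponents match by the identity $S(k_{s},\theta_{s})\,S(k_{s},i_{s})=S(k_{s},\theta_{s}i_{s})$, which holds because by (3) one has $k_{s}^{\,\theta_{s}-1}=1$ in $(\bZ/p_{s}^{n_{s}})^{\times}$ (as $\mathrm{ord}(k_{s})=p_{s}^{n_{s}-m_{s}}$ divides $p_{s}^{n_{s}-m_{s}+\ep_{s}}$), while the $A$-parts match because $S(k_{s},i_{s})\equiv i_{s}\pmmod{p_{s}^{m_{s}}}$ and $P_{s}=e$ together force $\prod_{u=0}^{i_{s}-1}\psi_{s}^{\,u\theta_{s}}(c_{s})=\prod_{u=0}^{S(k_{s},i_{s})-1}\psi_{s}^{\,u\theta_{s}}(c_{s})$. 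This proves $\pi\theta\pi^{-1}=\theta'\in\Aut(G)$, hence $\pi\in\NHol(G)$. This step — guessing the right $\theta'$ and managing the $A$-part bookkeeping — is where I expect the real difficulty to lie.

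For the order computation, assume also (4), i.e.\ $2m_{s}\ge n_{s}+\ep_{s}$. Then $k_{s}^{\,j}\equiv 1+ju_{s}p_{s}^{m_{s}}\pmmod{p_{s}^{n_{s}}}$ for all $j\ge 0$ (the remaining binomial terms carry a factor $p_{s}^{2m_{s}}\equiv 0$), so $S(k_{s},i)\equiv i+u_{s}p_{s}^{m_{s}}\binom{i}{2}\pmmod{p_{s}^{n_{s}}}$; iterating this, with (4) killing the correction terms (the summand $\ep_{s}$ being needed precisely when $p_{s}=2$, where a factor of $2$ is divided out), one finds $\pi^{N}(a\tau_{1}^{i_{1}}\cdots\tau_{r}^{i_{r}})=a\,\tau_{1}^{\,i_{1}+Nu_{1}p_{1}^{m_{1}}\binom{i_{1}}{2}}\cdots\tau_{r}^{\,i_{r}+Nu_{r}p_{r}^{m_{r}}\binom{i_{r}}{2}}$ for all $N\ge 1$. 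As $\pi^{N}$ fixes $e$, it lies in $\Hol(G)$ iff it lies in $\Aut(G)$, i.e.\ iff it is a homomorphism; by (2) the constraints coming from the $A$-part are automatic, and the constraint on the torus exponents reduces, via $\binom{i+j}{2}-\binom{i}{2}-\binom{j}{2}=ij$, to $Nu_{s}p_{s}^{m_{s}}\,ij\equiv 0\pmmod{p_{s}^{n_{s}}}$ for all $i,j$ and all $s$, i.e.\ $p_{s}^{n_{s}-m_{s}}\mid N$ for every $s$, i.e.\ $p_{1}^{n_{1}-m_{1}}\cdots p_{r}^{n_{r}-m_{r}}\mid N$. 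Hence $\pi\Hol(G)$ has order $p_{1}^{n_{1}-m_{1}}\cdots p_{r}^{n_{r}-m_{r}}$ in $T(G)$.
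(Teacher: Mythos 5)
Your proposal is correct and follows essentially the same route as the paper: it conjugates the generators $\rho(A)$, $\rho(\tau_s)$ and $\Aut(G)$ of $\Hol(G)$ by $\pi$ using the same arithmetic of $S(k,i)$ (the cocycle identity, $S(k_s,i)\equiv i \pmod{p_s^{m_s}}$, condition (3) forcing $k_s^{\theta_s-1}\equiv 1 \pmod{p_s^{n_s}}$, and the vanishing of the period product coming from conditions (1)--(2)), and then determines the least $N$ with $\pi^N\in\Hol(G)$ from $S(k_s^N,i)\equiv i+Nu_sp_s^{m_s}\binom{i}{2}\pmod{p_s^{n_s}}$ under (4). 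The only cosmetic differences are that the paper shows $\pi$ actually centralizes $\Aut(G)$ (your $\theta'$ in fact equals $\theta$, since condition (3) together with Proposition~\ref{bijective prop}(a) gives $S(k_s,\theta_s)\equiv\theta_s\pmod{p_s^{n_s}}$), and it reduces the order computation to deciding when $\pi^N=\mathrm{Id}_G$ rather than testing the homomorphism condition on the explicit formula directly.
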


Conditions (1),(2),(4) are easy to check. However, in general it is not clear whether condition (3) is satisfied. In Section~\ref{sec app}, given condition (1), and that $A$ is abelian and $\psi_1,\dots,\psi_r$ lie in the center of $\Aut(A)$, we shall give a simple way of picking $m_1,\dots,m_r$ such that all of conditions (2) -- (4) hold. Taking $\psi_1,\dots\psi_r$ to be power maps, our method allows us to easily produce examples of $G$ for which $T(G)$ has elements of odd order.

\vspace{1mm}

Finally, we remark that Theorem~\ref{thm} does not explain why $T(G)$ contains elements of odd order for the groups $G$ in \cite[(1.5) and Example 3.7]{Tsang squarefree}, except for the group $G$ with \textsc{SmallGroup ID} equal to $(63,1)$.

\section{Preliminary modular arithmetic}\label{sec mod}

In this section, let $p$ be a prime, and fix a number $k = 1 + up^m$, where
\[ u,m\in\bN\mbox{ are such that $p\nmid u$, with $m\geq 2$ if $p=2$}.\]
For any $i\in\bN$, by the binomial theorem, we have
\begin{equation}\label{S expression} S(k,i) = \frac{k^i-1}{k-1} = \frac{1}{up^m}\sum_{t=1}^{i}{i\choose t}u^t p^{mt} = i + \sum_{t=1}^{i-1}{i\choose t+1}u^t p^{mt}.\end{equation}
For each $t\in\bN$, let $p^{x_t}$ be the exact power of $p$ dividing $(t+1)!$. It is known that we have the explicit formula
\begin{equation}\label{xt} x_t = \left\lfloor \frac{t+1}{p}\right\rfloor + \left\lfloor \frac{t+1}{p^2}\right\rfloor  + \left\lfloor \frac{t+1}{p^3}\right\rfloor  + \cdots \mbox{ and so } x_t < \frac{t+1}{p-1}.\end{equation}
Also, put $\ep=1$ if $p=2$, and $\ep=0$ if $p$ is odd. We shall require the following properties of $S(k,i)$ modulo powers of $p$.

\begin{lem}\label{S lem}For any $i\in\bN$, the exact powers of $p$ dividing $i$ and $S(k,i)$ are the same.
\end{lem}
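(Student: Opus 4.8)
The plan is to exploit the expansion $(\ref{S expression})$, which displays $S(k,i)$ as $i$ plus a sum of ``error terms''; the whole point is to show that each error term is divisible by a strictly larger power of $p$ than $i$ itself is. Granting this, the exact power of $p$ dividing $S(k,i)$ must coincide with the one dividing $i$, because adding to an integer divisible by exactly $p^c$ a quantity divisible by $p^{c+1}$ changes neither. Throughout it is convenient to write $v_p(n)$ for the exponent of the exact power of $p$ dividing a nonzero integer $n$, with $v_p(0)=\infty$; the claim then reads $v_p(S(k,i)) = v_p(i)$, and by $(\ref{S expression})$ it reduces to showing
\[ v_p\!\left({i\choose t+1}u^t p^{mt}\right) > v_p(i)\quad\text{for every }1\le t\le i-1. \]

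The first step is to observe that, since $p\nmid u$, the left-hand side equals $v_p\!\left({i\choose t+1}\right)+mt$. Using the identity ${i\choose t+1} = \tfrac{i}{t+1}{i-1\choose t}$ together with the fact that ${i-1\choose t}$ is a positive integer, one gets $v_p\!\left({i\choose t+1}\right)\ge v_p(i)-v_p(t+1)$, so it is enough to prove $v_p(t+1) < mt$ for all $t\ge 1$. Here is where the quantities $x_t$ enter: since $t+1$ divides $(t+1)!$ we have $v_p(t+1)\le x_t$, and $(\ref{xt})$ gives $x_t < \tfrac{t+1}{p-1}$. Hence it suffices to verify the elementary inequality $\tfrac{t+1}{p-1}\le mt$, i.e.\ $t+1\le mt(p-1)$, for every $t\ge 1$.

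This last inequality is the only real point, and it is precisely where the standing restrictions on $m$ are used. If $p$ is odd then $m\ge1$ gives $mt(p-1)\ge 2t\ge t+1$; if $p=2$ then $m\ge2$ gives $mt(p-1)=mt\ge 2t\ge t+1$. In either case the inequality holds for all $t\ge1$, the extremal cases being $(p,m,t)=(3,1,1)$ and $(2,2,1)$ --- which is exactly why the hypothesis $m\ge2$ for $p=2$ cannot be dropped. Therefore $v_p(t+1)\le x_t < mt$, so each error term has valuation at least $v_p(i)+1$, the error sum has valuation $>v_p(i)$, and consequently $v_p(S(k,i))=v_p(i)$; the degenerate case $i=1$ (empty sum, $S(k,1)=1$) is trivial. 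I do not anticipate any difficulty beyond this one-line inequality and the routine bookkeeping with $v_p$.
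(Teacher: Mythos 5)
Your proof is correct and takes essentially the same route as the paper: expand $S(k,i)$ via (\ref{S expression}) and show that each correction term ${i\choose t+1}u^tp^{mt}$ has $p$-adic valuation strictly larger than that of $i$, using the bound $x_t<\frac{t+1}{p-1}$ from (\ref{xt}) together with the hypothesis $m\geq2$ when $p=2$. The only (harmless) difference is bookkeeping: you isolate the factor $i$ through ${i\choose t+1}=\frac{i}{t+1}{i-1\choose t}$ and so only need $v_p(t+1)<mt$, whereas the paper bounds the full denominator $(t+1)!$ and checks $mt-x_t\geq1$, treating $t=1,2$ separately.
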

\begin{proof}Let $p^\ell$ exactly divide $i$. For each $1\leq t\leq i-1$, we have
\[ \left(\frac{i(i-1)\cdots (i-t)}{(t+1)!}\right)p^{mt} \equiv0\pmmod{p^{\ell + mt - x_t}},\]
and by (\ref{S expression}), it then suffices to show that $mt - x_t\geq 1$. For $t=1,2$, clearly this holds, because $m\geq 2$ if $p=2$. For $3\leq t\leq i-1$, by (\ref{xt}) we have
\[ mt - x_t \geq \begin{cases} mt - (t+1) = t(m-1) - 1 \geq 3 -1 = 2 &\mbox{if $p=2$},\\
 mt - \frac{t+1}{2} = t\left(m -\frac{1}{2}\right) - \frac{1}{2} \geq \frac{3}{2} - \frac{1}{2} = 1&\mbox{if $p$ is odd}.\end{cases}\]
This proves the claim.
\end{proof}

\begin{prop}\label{bijective prop}For any $n\in\bN$, the map
\[ \Xi : \bZ/p^n\bZ \longrightarrow \bZ/p^n\bZ;\hspace{1em}\Xi(i\mmod{p^n}) = S(k,i)\mmod{p^n},\]
where $i$ is taken to be non-negative, is a well-defined bijection. Moreover:
\begin{enumerate}[(a)]
\item If $n\geq m$, then for all $i\equiv1\pmod{p^{n-m+\ep}}$, we have
\[ \Xi(i\mmod{p^n}) = i\mmod{p^n}.\]
\item If $n\leq 2m - \ep$, then for all $\ell,i\in\bN_{\geq0}$, we have
\[ \Xi^\ell(i\mmod{p^n}) = S(k^\ell,i)\mmod{p^n}.\]
%where $\ep^\ell$ is the bijection $\ep$ composed with itself $\ell$ times.
\end{enumerate}
\end{prop}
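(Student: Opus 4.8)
The plan is to establish the four assertions in turn — that $\Xi$ is well defined, that it is bijective, then (a), then (b) — using throughout the telescoping identity $S(k,i)-S(k,j)=k^{j}S(k,i-j)$ for integers $i\geq j\geq0$, together with Lemma~\ref{S lem}; I write $v_{p}$ for the $p$-adic valuation. If $i\equiv j\pmod{p^{n}}$ with $i\geq j\geq0$, then $p^{n}\mid i-j$, so $p^{n}\mid S(k,i-j)$ by Lemma~\ref{S lem}, and hence $p^{n}\mid S(k,i)-S(k,j)$ since $p\nmid k$: thus $\Xi$ is well defined. Conversely, if $0\leq i,j<p^{n}$ and $S(k,i)\equiv S(k,j)\pmod{p^{n}}$ with $i\geq j$, then $p^{n}\mid k^{j}S(k,i-j)$ forces $p^{n}\mid S(k,i-j)$, hence $p^{n}\mid i-j$ by Lemma~\ref{S lem}, hence $i=j$; so $\Xi$ is injective, and therefore bijective since $\bZ/p^{n}\bZ$ is finite.

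For (a), I would put $i=1+w$, so that the hypothesis $i\equiv1\pmod{p^{n-m+\ep}}$ becomes $p^{\,n-m+\ep}\mid w$ (a vacuous condition precisely when $n-m+\ep=0$). Using $S(k,1+w)=1+kS(k,w)$, the claim $\Xi(i\mmod{p^{n}})=i\mmod{p^{n}}$ is equivalent to $kS(k,w)\equiv w\pmod{p^{n}}$, and since $kS(k,w)-w=(k-1)S(k,w)+\bigl(S(k,w)-w\bigr)=up^{m}S(k,w)+\bigl(S(k,w)-w\bigr)$, this reads
\[ up^{m}S(k,w)+\bigl(S(k,w)-w\bigr)\equiv0\pmod{p^{n}}. \]
By Lemma~\ref{S lem} the first summand has valuation $m+v_{p}(S(k,w))=m+v_{p}(w)\geq m+(n-m+\ep)\geq n$. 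For the second, expand by (\ref{S expression}): in each term $\binom{w}{t+1}u^{t}p^{mt}$ the numerator $w(w-1)\cdots(w-t)$ of $\binom{w}{t+1}$ is divisible by $w$, so $v_{p}\bigl(\binom{w}{t+1}\bigr)\geq v_{p}(w)-x_{t}\geq(n-m+\ep)-x_{t}$ and the term has valuation at least $(n-m+\ep)+mt-x_{t}$, which is $\geq n$ provided $mt-x_{t}\geq m-\ep$. This last inequality is checked exactly as in the proof of Lemma~\ref{S lem}: it is an equality at $t=1$ (since $x_{1}=\ep$), it reduces to $x_{2}\leq m+\ep$ at $t=2$, and for $t\geq3$ it follows from (\ref{xt}) and the standing bound $m\geq1$ (with $m\geq2$ when $p=2$).

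For (b), the key point is that under $n\leq2m-\ep$ the map $\Xi$ linearizes modulo $p^{n}$: in (\ref{S expression}) every term with $t\geq2$ has valuation $\geq mt\geq2m\geq n$, so $\Xi(i\mmod{p^{n}})\equiv i+\binom{i}{2}up^{m}\pmod{p^{n}}$ for all $i\in\bN_{\geq0}$. Applying the same estimate to $S(k^{\ell},i)=\sum_{j=0}^{i-1}(1+up^{m})^{\ell j}$ gives $S(k^{\ell},i)\equiv i+\ell\binom{i}{2}up^{m}\pmod{p^{n}}$. It then suffices to prove $\Xi^{\ell}(i\mmod{p^{n}})\equiv i+\ell\binom{i}{2}up^{m}\pmod{p^{n}}$ by induction on $\ell\geq0$, the cases $\ell\leq1$ being immediate. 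For the inductive step, let $z$ be a non-negative representative of $\Xi^{\ell-1}(i\mmod{p^{n}})$; the inductive hypothesis gives $z\equiv i\pmod{p^{\min(m,n)}}$, so $z(z-1)\equiv i(i-1)\pmod{p^{\min(m,n)}}$ and hence $\binom{z}{2}\equiv\binom{i}{2}\pmod{p^{\min(m,n)-\ep}}$, whence $\binom{z}{2}up^{m}\equiv\binom{i}{2}up^{m}\pmod{p^{n}}$ (here $n\leq2m-\ep$ and $m\geq\ep$ are used). Applying the linearized formula for $\Xi$ to $z$ then yields $\Xi^{\ell}(i\mmod{p^{n}})\equiv z+\binom{z}{2}up^{m}\equiv i+\ell\binom{i}{2}up^{m}\pmod{p^{n}}$, and comparison with the formula for $S(k^{\ell},i)$ finishes (b).

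The computations are all routine $p$-adic valuation counts modeled on the proof of Lemma~\ref{S lem}; the one genuinely delicate point I anticipate is the loss of a factor $2$ when $p=2$ — in (b) this occurs in passing from $z(z-1)\equiv i(i-1)$ to $\binom{z}{2}\equiv\binom{i}{2}$, and it is exactly what the hypotheses $m\geq2$ and $n\leq2m-\ep$ are tuned to absorb, while in (a) the corresponding care is to reduce to $S(k,w)$ with $p^{\,n-m+\ep}\mid w$ via $S(k,1+w)=1+kS(k,w)$ rather than estimating $S(k,i)-i$ directly.
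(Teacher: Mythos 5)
Your proof is correct and follows essentially the same route as the paper: the same telescoping identities plus Lemma~\ref{S lem} for well-definedness and injectivity, the same valuation estimate reducing (a) to the inequality $m(t-1)-x_t+\ep\geq0$ (your substitution $i=1+w$ is only a cosmetic variant of the paper's direct estimate of $S(k,i)-i$, which works because $i-1$ divides the numerator of each binomial coefficient), and the same linearization $\Xi(i)\equiv i+\binom{i}{2}up^m$ with induction on $\ell$ for (b), where the paper expands $\binom{i_\ell}{2}$ explicitly while you argue via $\binom{z}{2}\equiv\binom{i}{2}\pmod{p^{\min(m,n)-\ep}}$.
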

\begin{proof}For any $i,j,\ell\in\bN_{\geq0}$, with $j\geq i$ say, we have the identities
\begin{align*} S(k,i+\ell p^n) & = S(k,i) + k^iS(k,\ell p^n),\\
 S(k,j) - S(k,i) &= k^iS(k,j-i).
\end{align*}
From Lemma~\ref{S lem}, we then see that $\Xi$ is well-defined and injective. Since the group $\bZ/p^n\bZ$ is finite, it follows that $\Xi$ is in fact a bijection.

\vspace{1mm}

To prove (a), suppose that $n\geq m$ and $i\equiv1$ (mod $p^{n-m+\ep}$). Then, for each $1\leq t\leq i-1$, we have
\[\left(\frac{i(i-1)\cdots (i-t)}{(t+1)!}\right)p^{mt} \equiv0\pmmod{p^{n-m+\ep + mt - x_t}},\]
and by (\ref{S expression}), it is enough to show that $m(t-1)-x_t+\ep\geq 0$. For $t=1,2$, this is clear. For $3\leq t\leq i-1$, by (\ref{xt}) we have
\begin{align*}
&\hspace{10mm}m(t-1) - x_t + \ep\\ &\geq \begin{cases}
m(t-1) - (t+1) + 1 = t(m-1) - m \geq 2m - 3 \geq 1 &\mbox{if $p=2$},\\
m(t-1) - \frac{t+1}{2} = t\left(m-\frac{1}{2}\right) - m - \frac{1}{2} \geq 2m - 2 \geq 0&\mbox{if $p$ is odd}.
\end{cases}\end{align*}
Here, we used the assumption that $m\geq 2$ if $p=2$. Thus, indeed (a) holds.

\vspace{1mm}

To prove (b), suppose that $n\leq 2m-\ep$. Then $k^\ell \equiv 1 + \ell up^m$ (mod $p^n$) by the binomial theorem, and so we have
\[S(k^\ell,i) \equiv  i_\ell \pmmod{p^n},\mbox{ where }i_\ell = i + {i\choose 2}\ell u p^m.\]
For $\ell=0,1$, the claim is trivial. For $\ell\geq 1$, the claim for $\ell$ implies that
\[ \Xi^{\ell + 1}(i\mmod{p^n}) 
 = \Xi(i_\ell \mmod{p^n}) = i_\ell + {i_\ell \choose 2}up^m\mmod{p^n}.\]
But observe that
\[ {i_\ell \choose 2}up^m = {i\choose 2}up^m + \frac{1}{2}(2i-1){i\choose 2}\ell u^2p^{2m} + \frac{1}{2}{i\choose 2}^2\ell^2u^3p^{3m}.\]
Since $n\leq 2m-\ep$, the last three terms are zero modulo $p^{n}$, and so
\[ \Xi^{\ell+1}(i\mmod{p^n}) = i_\ell + {i\choose 2}up^m \mmod{p^n} = i + {i\choose 2}(\ell+1)up^m \mmod{p^n}.\]
We have thus proven (b) by induction.\end{proof}

\begin{lem}\label{tau lem}For any $n\in\bN$ and $i\in\bN_{\geq0}$, we have
\[ S(k,\widetilde{i}-1) - S(k,p^n-1) \equiv k'i \pmmod{p^n},\]
where $\widetilde{i},k'\in\bN$ are such that
\[ S(k,\widetilde{i})\equiv i\pmmod{p^n}\mbox{ and }kk'\equiv1\pmmod{p^n}.\]
\end{lem}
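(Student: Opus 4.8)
The plan is to reduce the whole statement to the elementary identity
\[ k\cdot S(k,j-1) = k+k^2+\cdots+k^{j-1} = S(k,j)-1,\qquad j\in\bN, \]
which is immediate from $S(k,j)=1+k+\cdots+k^{j-1}$ and $S(k,0)=0$. Before using it, I would note that the statement is meaningful: $\widetilde{i}$ exists because the map $\Xi$ of Proposition~\ref{bijective prop} is a bijection, $k'$ exists because $k=1+up^m$ is prime to $p$, and $\widetilde{i}\geq1$ so that $S(k,\widetilde{i}-1)$ makes sense. Moreover, by the injectivity and well-definedness of $\Xi$ (equivalently Lemma~\ref{S lem}), the residues $\widetilde{i}\bmod p^n$, hence $S(k,\widetilde{i}-1)\bmod p^n$, and $k'\bmod p^n$ are all uniquely determined by $i$, so the claimed congruence does not depend on the choices of $\widetilde{i}$ and $k'$.

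Next, applying the displayed identity with $j=\widetilde{i}$ and with $j=p^n$ and subtracting, I obtain the exact integer identity
\[ k\bigl(S(k,\widetilde{i}-1)-S(k,p^n-1)\bigr) = \bigl(S(k,\widetilde{i})-1\bigr)-\bigl(S(k,p^n)-1\bigr) = S(k,\widetilde{i})-S(k,p^n). \]
Now reduce modulo $p^n$: on the right-hand side, $S(k,\widetilde{i})\equiv i\pmod{p^n}$ by the defining property of $\widetilde{i}$, while $S(k,p^n)\equiv S(k,0)=0\pmod{p^n}$ because $\Xi$ is well-defined (so $S(k,\cdot)$ modulo $p^n$ depends only on its argument modulo $p^n$, and $p^n\equiv0$). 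Hence $k\bigl(S(k,\widetilde{i}-1)-S(k,p^n-1)\bigr)\equiv i\pmod{p^n}$, and multiplying through by $k'$ and using $kk'\equiv1\pmod{p^n}$ gives $S(k,\widetilde{i}-1)-S(k,p^n-1)\equiv k'i\pmod{p^n}$, which is exactly the assertion.

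I do not expect any genuine obstacle here. The only point that requires a little care is the well-definedness remark at the outset (that the quantities in the statement are independent of the chosen representatives $\widetilde{i}$ and $k'$), and this is supplied by the bijectivity of $\Xi$ from Proposition~\ref{bijective prop}; everything else is the one-line identity above followed by substitution.
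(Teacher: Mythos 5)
Your proof is correct, and it takes a genuinely cleaner route than the paper's. The paper first reduces (without loss of generality, using well-definedness) to $1\leq\widetilde{i}\leq p^n$, writes the difference as $S(k,\widetilde{i}-1)-S(k,p^n-1)=-k^{\widetilde{i}-1}S(k,p^n-\widetilde{i})$ via the identity $S(k,j)-S(k,i)=k^iS(k,j-i)$, then uses $S(k,p^n)\equiv0\pmod{p^n}$ (Lemma~\ref{S lem}) to rewrite this as $k^{p^n-1}S(k,\widetilde{i})\equiv k^{p^n-1}i$, and finally invokes $k^{p^n}\equiv1\pmod{p^n}$ to identify $k^{p^n-1}$ with $k'$. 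You instead multiply the difference by $k$ and use the one-line identity $k\,S(k,j-1)=S(k,j)-1$ at $j=\widetilde{i}$ and $j=p^n$, getting $k\bigl(S(k,\widetilde{i}-1)-S(k,p^n-1)\bigr)=S(k,\widetilde{i})-S(k,p^n)\equiv i\pmod{p^n}$, and then multiply by $k'$. Both arguments hinge on the same two facts, $S(k,p^n)\equiv0\pmod{p^n}$ and $S(k,\widetilde{i})\equiv i\pmod{p^n}$, but your version needs no normalization of $\widetilde{i}$ (the identity is valid for every representative $\widetilde{i}\geq1$, so the independence-of-representatives remark is not even strictly necessary), avoids the subtraction identity with its sign bookkeeping, and replaces the appeal to $k^{p^n}\equiv1\pmod{p^n}$ by the defining congruence $kk'\equiv1\pmod{p^n}$ directly; what it costs is nothing beyond the observation $k\,S(k,j-1)=S(k,j)-1$.
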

\begin{proof}Without loss of generality, we may take $1\leq\widetilde{i}\leq p^n$. Then, we have
\[ S(k,\widetilde{i}-1) - S(k,p^n-1) = -k^{\widetilde{i}-1}S(k,p^n-\widetilde{i}).\]
Since $S(k,p^n)\equiv 0$ (mod $p^n$) by Lemma~\ref{S lem}, we deduce that
\[ -k^{\widetilde{i}-1}S(k,p^n-\widetilde{i}) \equiv -k^{\widetilde{i}-1}\cdot - k^{p^n-\widetilde{i}}S(k,\widetilde{i}) \equiv k^{p^n-1} i \pmmod{p^n}.\]
But clearly $k^{p^n}\equiv1$ (mod $p^n$) and so the claim follows.
\end{proof}

\section{Proof of Theorem~\ref{thm}}\label{sec construct}

In this section, suppose that $G$ is as in (\ref{G def}), and we shall use the notation defined prior to Theorem~\ref{thm}.

\subsection{First claim} By definition, we have
\[\Hol(G) = \rho(G)\rtimes\Aut(G)\mbox{ and }\NHol(G) = \Norm_{\Perm(G)}(\Hol(G)).\]
Under conditions (1) -- (3) in Theorem~\ref{thm}, we then deduce from Lemmas~\ref{thm lem} and~\ref{thm lem'} below that $\pi\in\NHol(G)$, as desired.

\begin{lem}\label{thm lem}Suppose that condition $(2)$ in Theorem~$\ref{thm}$ holds. Then:
\begin{enumerate}[(a)]
\item For any $b\in A$, we have $\pi\rho(b)\pi^{-1} = \rho(b)$.
\item For any $1\leq s\leq r$, we have $\pi\rho(\tau_s)\pi^{-1}\in \rho(\tau_s^{S(k_s,p_s^{n_s}-1)})^{-1}\cdot \Aut(G)$.
\end{enumerate}
\end{lem}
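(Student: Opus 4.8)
The plan is to compute the conjugate $\pi\rho(g)\pi^{-1}$ directly as a permutation of $G$, using the explicit formula for $\pi$ together with the multiplicativity of $S(k_s,-)$ established in Proposition~\ref{bijective prop} and Lemma~\ref{tau lem}. Write a typical element of $G$ as $g = a\tau_1^{i_1}\cdots\tau_r^{i_r}$ with $a\in A$ and $0\le i_s<p_s^{n_s}$, and recall that right multiplication in $G$ must be rewritten back into this normal form using the relations $\tau_s a\tau_s^{-1}=\psi_s(a)$ (and the commuting of the $\tau_s$ among themselves). The key simplification is condition (2): since $\psi_s$ has order dividing $p_s^{m_s}$, the automorphism $\psi_s^{i_s}$ depends only on $i_s \bmod p_s^{m_s}$, and by Lemma~\ref{S lem} (applied with $n=m_s$, noting $S(k_s,i_s)\equiv i_s$ whenever... more precisely using $S(k_s,\cdot)$ fixes residues appropriately) one checks $\psi_s^{S(k_s,i_s)}=\psi_s^{i_s}$ for all $i_s$. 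This is the crucial point that makes the $A$-part of $\pi$ interact trivially with the conjugation. Actually the cleanest route: note $S(k_s,i_s)\equiv i_s\pmod{p_s^{m_s}}$ is \emph{not} generally true, but $\psi_s^{S(k_s,i_s)}=\psi_s^{i_s}$ will follow from the structure of $S$; I would isolate this as a one-line sub-claim before the main computation.

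For part (a), I would fix $b\in A$ and evaluate $\pi\rho(b)\pi^{-1}$ at an arbitrary $g=a\tau_1^{i_1}\cdots\tau_r^{i_r}$. Applying $\pi^{-1}$ replaces each exponent $i_s$ by $\widetilde{i_s}$ with $S(k_s,\widetilde{i_s})\equiv i_s$; then $\rho(b)$ multiplies on the right by $b^{-1}$, which after pushing $b^{-1}$ past the $\tau$'s becomes $a\cdot(\prod_s\psi_s^{\widetilde{i_s}})(b^{-1})\cdot\tau_1^{\widetilde{i_1}}\cdots\tau_r^{\widetilde{i_r}}$; finally $\pi$ restores the exponents to $i_s$ and leaves the $A$-part alone. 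Using the sub-claim $\psi_s^{S(k_s,\widetilde{i_s})}=\psi_s^{\widetilde{i_s}}$ together with $S(k_s,\widetilde{i_s})\equiv i_s$ modulo $p_s^{n_s}$ — hence $\psi_s^{\widetilde{i_s}}=\psi_s^{i_s}$ as $\psi_s$ has order dividing $p_s^{m_s}\mid p_s^{n_s}$ — the conjugate sends $g$ to $a\cdot(\prod_s\psi_s^{i_s})(b^{-1})\cdot\tau_1^{i_1}\cdots\tau_r^{i_r}=g b^{-1}=\rho(b)(g)$. For part (b), the same computation with $\rho(\tau_s)$ in place of $\rho(b)$ is governed by Lemma~\ref{tau lem}: right multiplication by $\tau_s^{-1}$ shifts $\widetilde{i_s}\mapsto\widetilde{i_s}-1$ (no $\psi$ appears since the $\tau$'s commute and $\tau_s$ does not act on itself), and after reapplying $\pi$ the $s$-th exponent becomes $S(k_s,\widetilde{i_s}-1)$; Lemma~\ref{tau lem} identifies $S(k_s,\widetilde{i_s}-1)-S(k_s,p_s^{n_s}-1)\equiv k_s' i_s\pmod{p_s^{n_s}}$, so $\pi\rho(\tau_s)\pi^{-1}$ agrees with $\rho(\tau_s^{S(k_s,p_s^{n_s}-1)})^{-1}$ followed by the map $\tau_s^{i_s}\mapsto\tau_s^{k_s' i_s}$ on the $s$-th coordinate — and this last map, being the $k_s'$-power map on $\langle\tau_s\rangle$ extended by the identity elsewhere (combined with whatever correction is needed on $A$), is an automorphism of $G$; that is the asserted membership in $\rho(\tau_s^{S(k_s,p_s^{n_s}-1)})^{-1}\cdot\Aut(G)$.

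The main obstacle I anticipate is bookkeeping rather than depth: carefully tracking how the factors $\psi_s^{i_s}$ from the various coordinates compose when one normal-form element is multiplied by another, and verifying that the residual map on the $s$-th coordinate genuinely extends to an automorphism of the \emph{whole} semidirect product $G$ (one must check compatibility with the action of $\langle\tau_s\rangle$ on $A$ and with the other $\tau_t$'s, which is where condition (2) — $\psi_s$ having order dividing $p_s^{m_s}$ — is used a second time to see that the $k_s'$-power twist on exponents does not disturb the relations $\tau_s a\tau_s^{-1}=\psi_s(a)$). Everything hinges on the two facts that $S(k_s,-)$ preserves $p_s$-adic valuations (Lemma~\ref{S lem}) and satisfies the quasi-linear shift identity (Lemma~\ref{tau lem}); once those are in hand the computations are mechanical.
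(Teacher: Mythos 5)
Your proposal follows essentially the same route as the paper: compute everything on normal forms $a\tau_1^{i_1}\cdots\tau_r^{i_r}$, use condition (2) to make the $A$-part insensitive to replacing exponents $i_s$ by $S(k_s,i_s)$ in part (a), and in part (b) use Lemma~\ref{tau lem} to identify the residual map $a\tau_1^{i_1}\cdots\tau_s^{i_s}\cdots\tau_r^{i_r}\mapsto a\tau_1^{i_1}\cdots\tau_s^{k_s'i_s}\cdots\tau_r^{i_r}$ and then check it is an automorphism, which is exactly where condition (2) enters a second time via $k_s'\equiv 1\pmod{p_s^{m_s}}$ and hence $\psi_s^{k_s'}=\psi_s$; this is precisely the paper's $\theta=\rho(\tau_s^{S(k_s,p_s^{n_s}-1)})\pi\rho(\tau_s)\pi^{-1}$. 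One correction to your hedging: the congruence $S(k_s,i_s)\equiv i_s\pmod{p_s^{m_s}}$ \emph{is} true for every $i_s$ (since $k_s\equiv1\pmod{p_s^{m_s}}$, each power $k_s^j\equiv1$, so the sum of $i_s$ such terms is $\equiv i_s$; equivalently it is visible in (\ref{S expression})), and this one-line observation is exactly how the paper, and your sub-claim $\psi_s^{S(k_s,i_s)}=\psi_s^{i_s}$, is justified --- no appeal to Lemma~\ref{S lem} is needed there. Two smaller points: the paper streamlines (a) by verifying $\pi\rho(b)=\rho(b)\pi$ on normal forms, avoiding $\pi^{-1}$ and the $\widetilde{i_s}$ bookkeeping; and in (b) mind the order of composition --- the computation gives $\pi\rho(\tau_s)\pi^{-1}=\rho(\tau_s^{S(k_s,p_s^{n_s}-1)})^{-1}\circ\theta$ with the residual map $\theta$ applied first (your phrase ``$\rho(\tau_s^{S(k_s,p_s^{n_s}-1)})^{-1}$ followed by the map'' reverses this), which is what membership in the coset $\rho(\tau_s^{S(k_s,p_s^{n_s}-1)})^{-1}\cdot\Aut(G)$ requires.
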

\begin{proof}[Proof of (a)]Let $b\in A$. For any $a\in A$ and $i_1,\dots,i_r\in\bN_{\geq0}$, we have
\begin{align*}
(\pi \rho(b))(a\tau_1^{i_1}\cdots\tau_r^{i_r}) & =  a(\psi_1^{i_1}\cdots\psi_r^{i_r})(b)^{-1}\cdot \tau_1^{S(k_1,i_1)}\cdots\tau_r^{S(k_r,i_r)},\\
(\rho(b)\pi)(a\tau_1^{i_1}\cdots\tau_r^{i_r}) & = a(\psi_1^{S(k_1,i_1)}\cdots\psi_r^{S(k_r,i_r)})(b)^{-1}\cdot\tau_1^{S(k_1,i_1)}\cdots\tau_r^{S(k_r,i_r)}.
\end{align*}
For each $1\leq s\leq r$, note that $S(k_s,i_s) \equiv  i_s$ (mod $p_s^{m_s}$), and since $\psi_s^{p_s^{m_s}} = \mathrm{Id}_A$ by condition (2), we have $\psi_s^{S(k_s,i_s)} = \psi_s^{i_s}$. Hence, indeed $\pi\rho(b) = \rho(b)\pi$.\end{proof}
\begin{proof}[Proof of (b)] Let $1\leq s\leq r$, and for brevity, put
\[ \theta = \rho(\tau_s^{S(k_s,p_s^{n_s}-1)})\pi\rho(\tau_s)\pi^{-1}.\]
For any $a\in A$ and $i_1,\dots,i_r\in\bN_{\geq 0}$, we have
\[\theta(a\tau_1^{i_1}\cdots\tau_r^{i_r}) 
 = a\tau_1^{i_1}\cdots\tau_s^{S(k_s,\widetilde{i}_s-1)-S(k_s,p_s^{n_s}-1)}\cdots \tau_r^{i_r}
 = a\tau_1^{i_1}\cdots\tau_s^{k'_si_s}\cdots \tau_r^{i_r}\]
by Lemma~\ref{tau lem}, where $\widetilde{i}_s,k_s'\in\bN$ are such that
\[ S(k_s,\widetilde{i}_s)\equiv i_s\pmmod{p_s^{n_s}}\mbox{ and }k_sk'_s\equiv1\pmmod{p_s^{n_s}}.\]
For any $b\in A$ and $j_1,\dots,j_r\in\bN_{\geq0}$, observe that
\begin{align*}
&\hspace{7.5mm}\theta(a\tau_1^{i_1}\cdots\tau_r^{i_r}\cdot b\tau_1^{j_1}\cdots\tau_r^{j_r}) \\ &= a(\psi_1^{i_1}\cdots\psi_s^{i_s}\cdots\psi_r^{i_r})(b)\cdot\tau_1^{i_1+j_1}\cdots\tau_s^{k'_s(i_s+j_s)}\cdots\tau_r^{i_r+j_r},\\
&\hspace{7.5mm}\theta(a\tau_1^{i_1}\cdots\tau_r^{i_r})\theta(b\tau_1^{j_1}\cdots\tau_r^{j_r})\\&=a(\psi_1^{i_1}\cdots\psi_s^{k'_si_s}\cdots\psi_r^{i_r})(b)\cdot\tau_1^{i_1+j_1}\cdots \tau_s^{k'_s(i_s+j_s)}\cdots \tau_r^{i_r+j_r}.\end{align*}
Note that $k'_s\equiv1$ (mod $p_s^{m_s}$), and since $\psi_s^{p_s^{m_s}} = \mathrm{Id}_A$ by condition (2), we have $\psi_s^{k_s'} = \psi_s$. The above then shows that $\theta\in \Aut(G)$, as desired.
\end{proof}

\begin{lem}\label{thm lem'}Suppose that conditions $(1)$ -- $(3)$ in Theorem~$\ref{thm}$ hold. Then, for any $\theta\in \Aut(G)$, we have $\pi\theta\pi^{-1} =\theta$.
\end{lem}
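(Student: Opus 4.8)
The plan is to verify the identity $\pi\theta=\theta\pi$ in $\Perm(G)$ directly, by evaluating both permutations on an arbitrary element $a\tau_1^{i_1}\cdots\tau_r^{i_r}$ with $a\in A$ and $i_1,\dots,i_r\in\bN_{\geq 0}$. First I would note that condition (1) forces $A=\{g\in G:g^{\exp(A)}=1\}$: an element of $G$ of order dividing $\exp(A)$ maps to an element of order dividing $\gcd(\exp(A),d)=1$ in $G/A\cong C_d$, hence lies in $A$. Thus $A$ is characteristic in $G$, so $\theta$ restricts to an automorphism of $A$, and using $(\ref{theta})$ together with the usual formula for powers in a semidirect product one obtains the normal form
\[ \theta(a\tau_1^{i_1}\cdots\tau_r^{i_r}) = \theta(a)\cdot D(i_1,\dots,i_r)\cdot\tau_1^{i_1\theta_1}\cdots\tau_r^{i_r\theta_r}, \]
where, writing $d_s(i)=\prod_{j=0}^{i-1}\psi_s^{j\theta_s}(c_s)\in A$, the $A$-valued factor is
\[ D(i_1,\dots,i_r)=\prod_{s=1}^{r}\big(\psi_1^{i_1\theta_1}\cdots\psi_{s-1}^{i_{s-1}\theta_{s-1}}\big)\big(d_s(i_s)\big), \]
the prefix automorphism being trivial for $s=1$. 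Applying $\pi$ to this normal form and, separately, computing $\theta(\pi(a\tau_1^{i_1}\cdots\tau_r^{i_r}))$ from it, the equality $\pi\theta=\theta\pi$ reduces to two independent assertions: the exponent congruences
\[ S(k_s,i_s\theta_s)\equiv S(k_s,i_s)\,\theta_s \pmod{p_s^{n_s}}\qquad(1\le s\le r), \]
and the $A$-part equality $D\big(S(k_1,i_1),\dots,S(k_r,i_r)\big)=D(i_1,\dots,i_r)$.

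For the exponent congruences I would use the formal identity $S(k,ab)=S(k^a,b)\,S(k,a)$, immediate from $S(k,N)=(k^N-1)/(k-1)$, applied with $a=\theta_s$ and $b=i_s$ (taking $\theta_s$ non-negative, as we may). By Lemma~\ref{S lem}, $v_{p_s}(k_s^{\theta_s}-k_s)=v_{p_s}(k_s-1)+v_{p_s}\big(S(k_s,\theta_s-1)\big)=m_s+v_{p_s}(\theta_s-1)\ge n_s$ by condition (3), so $k_s^{\theta_s}\equiv k_s\pmod{p_s^{n_s}}$; since $S(K,i_s)=\sum_{j<i_s}K^j$ depends on $K$ only modulo $p_s^{n_s}$, this gives $S(k_s^{\theta_s},i_s)\equiv S(k_s,i_s)\pmod{p_s^{n_s}}$. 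On the other hand, condition (3) is precisely the hypothesis under which Proposition~\ref{bijective prop}(a) applies (note $m_s\le n_s$), yielding $S(k_s,\theta_s)\equiv\theta_s\pmod{p_s^{n_s}}$. Multiplying the two congruences gives the claim.

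The $A$-part equality is the step I expect to be the main obstacle, and its crux is that $P_s:=d_s(p_s^{m_s})$ equals $1$ for every $s$ — without this, the substitution $i_s\mapsto S(k_s,i_s)$ alters the factor $d_s(i_s)$ of $D$ via $d_s(S(k_s,i_s))=P_s^{\,q_s}d_s(i_s)$ with $q_s=(S(k_s,i_s)-i_s)/p_s^{m_s}\in\bZ$, and nothing in the modular arithmetic of Section~\ref{sec mod} alone makes this trivial. To see $P_s=1$: by condition (2), $\psi_s^{p_s^{m_s}}=\mathrm{Id}_A$, so $d_s(N+p_s^{m_s})=P_s\,d_s(N)$ for all $N$, hence $d_s(p_s^{n_s})=P_s^{\,p_s^{n_s-m_s}}$; expanding $\theta(\tau_s)^{p_s^{n_s}}=(c_s\tau_s^{\theta_s})^{p_s^{n_s}}=d_s(p_s^{n_s})\,\tau_s^{p_s^{n_s}\theta_s}=d_s(p_s^{n_s})$ and using that $\theta(\tau_s)$ has the same order $p_s^{n_s}$ as $\tau_s$, we get $P_s^{\,p_s^{n_s-m_s}}=1$, while $P_s\in A$ has order dividing $\exp(A)$, which by condition (1) is coprime to $p_s$; therefore $P_s=1$. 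Granting this, $d_s(i_s)$ depends only on $i_s$ modulo $p_s^{m_s}$, and each prefix $\psi_1^{i_1\theta_1}\cdots\psi_{s-1}^{i_{s-1}\theta_{s-1}}$ depends (again by condition (2)) only on the $i_u$ modulo $p_u^{m_u}$ for $u<s$; since $S(k_u,i_u)\equiv i_u\pmod{p_u^{m_u}}$ — immediate from $(\ref{S expression})$, as every term there with $t\ge1$ is divisible by $p_u^{m_u}$ — the substitution $i_u\mapsto S(k_u,i_u)$ leaves $D$ unchanged, giving the required equality and completing the proof.
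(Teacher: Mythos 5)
Your proof is correct and follows essentially the same route as the paper: evaluate $\pi\theta$ and $\theta\pi$ on a normal form, kill the $A$-part via the identity $c_s\psi_s^{\theta_s}(c_s)\cdots\psi_s^{\theta_s(p_s^{m_s}-1)}(c_s)=1_G$ (your $P_s=1$, obtained exactly as in the paper from the order of $\theta(\tau_s)$ together with conditions (1),(2)), and handle the $\tau$-exponents via the factorization $S(k,ab)=S(k^a,b)S(k,a)$ combined with Proposition~\ref{bijective prop}(a) and condition (3). The only difference is cosmetic: you factor $S(k_s,\theta_si_s)$ as $S(k_s^{\theta_s},i_s)S(k_s,\theta_s)$ and check $k_s^{\theta_s}\equiv k_s\pmod{p_s^{n_s}}$, whereas the paper uses $S(k_s^{i_s},\theta_s)S(k_s,i_s)$.
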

\begin{proof}Let $\theta\in\Aut(G)$. For each $1\leq s\leq r$, write $\theta(\tau_s) = c_s\tau_s^{\theta_s}$ as in (\ref{theta}), where $c_s\in A$ and $\theta_s\in\bZ$. Note that since $\tau_s$ has order $p_s^{n_s}$, and $\psi_s^{p_s^{m_s}} =\mathrm{Id}_A$ by condition (2), we see that
\[ 1_G = (c_s\tau_s^{\theta_s})^{p_s^{n_s}} = (c_s\psi_s^{\theta_s}(c_s)\cdots \psi_s^{\theta_s(p_s^{m_s}-1)}(c_s))^{p_s^{n_s-m_s}}.\]
By condition (1), this implies that in fact
\begin{equation}\label{cs}c_s\psi_s^{\theta_s}(c_s)\cdots \psi_s^{\theta_s(p_s^{m_s}-1)}(c_s) = 1_G.\end{equation}
For any $a\in A$ and $i_1,\dots,i_r\in\bN_{\geq0}$, observe that $\theta(a)\in A$ by condition (1), and so we compute that
\begin{align*}
(\pi\theta)(a\tau_1^{i_1}\cdots\tau_r^{i_r}) & = \pi(\theta(a)(c_1\tau_1^{\theta_1})^{i_1}\cdots(c_r\tau_r^{\theta_r})^{i_r})\\
& = \pi(\theta(a)\mathfrak{c}_1\tau_1^{\theta_1i_1}\cdots \mathfrak{c}_r\tau_r^{\theta_ri_r})\\
& = \theta(a)\mathfrak{c}_1\psi_1^{\theta_1i_1}(\mathfrak{c}_2)\cdots(\psi_1^{\theta_1i_1}\cdots\psi_{r-1}^{\theta_{r-1}i_{r-1}})(\mathfrak{c}_{r})\\ & \hspace{4.5cm}\cdot\tau_1^{S(k_1,\theta_1i_1)}\cdots\tau_r^{S(k_r,\theta_ri_r)},\\
%\end{align*}
%\begin{align*}
(\theta\pi)(a\tau_1^{i_1}\cdots\tau_r^{i_r}) &  = \theta(a)(c_1\tau_1^{\theta_1})^{S(k_1,i_1)}\cdots (c_r\tau_r^{\theta_r})^{S(k_r,i_r)}\\
& = \theta(a)\mathfrak{c}_1'\tau_1^{\theta_1S(k_1,i_1)}\cdots \mathfrak{c}_r'\tau_r^{\theta_rS(k_r,i_r)}\\
& = \theta(a)\mathfrak{c}_1'\psi_1^{\theta_1S(k_1,i_1)}(\mathfrak{c}_2')\cdots (\psi_1^{\theta_1S(k_1,i_1)}\cdots\psi_{r-1}^{\theta_{r-1}S(k_{r-1},i_{r-1})})(\mathfrak{c}_r')\\
& \hspace{7.75cm}\cdot\tau_1^{\theta_1S(k_1,i_1)}\cdots\tau_r^{\theta_rS(k_r,i_r)}.
\end{align*}
Here, for each $1\leq s\leq r$, we define
\begin{align*}\mathfrak{c}_s &= c_s\psi_s^{\theta_s}(c_s)\cdots \psi_s^{\theta_s(i_s-1)}(c_s),\\
\mathfrak{c}_s' & = c_s\psi_s^{\theta_s}(c_s)\cdots \psi_s^{\theta_s(S(k_s,i_s)-1)}(c_s).\end{align*}
But $S(k_s,i_s)\equiv i_s$ (mod $p_s^{m_s}$), and $\psi_s^{p_s^{m_s}}=\mathrm{Id}_A$ by condition (2). So, we have
\[ \psi_s^{\theta_sS(k_s,i_s)} = \psi_s^{\theta_si_s},\mbox{ and }\mathfrak{c}_s = \mathfrak{c}_s'\]
because of (\ref{cs}). Also, by Proposition~\ref{bijective prop}(a) and condition (3), we have
\[ S(k_s,\theta_si_s) \equiv S(k_s^{i_s},\theta_s)S(k_s,i_s) \equiv \theta_sS(k_s,i_s)\pmmod{p_s^{n_s}}.\]
We then deduce that $\pi\theta=\theta\pi$, as desired.
\end{proof}

\subsection{Second claim} 

Observe that 
\[ \pi(1_G) =1_G,\, \pi(a) = a \mbox{ for all }a\in A,\, \pi(\tau_s) = \tau_s \mbox{ for all }1\leq s\leq r.\]
For any $\ell\in\bN$, we then deduce that
\[ \pi^\ell \in \Hol(G) \iff \pi^\ell \in \Aut(G) \iff \pi^\ell = \mathrm{Id}_G.\]
Suppose now that condition (4) in Theorem~\ref{thm} holds. By Proposition~\ref{bijective prop}(b), we know that explicitly 
\[ \pi^\ell : G\longrightarrow G;\hspace{1em}\pi^\ell(a\tau_1^{i_1}\cdots\tau_r^{i_r}) = a\tau_1^{S(k_1^\ell,i_1)}\cdots \tau_r^{S(k_r^\ell,i_r)},\]
where $a\in A$ and the $i_1,\dots,i_r$ are taken to be non-negative. Hence, it follows that $\pi^\ell \in\Hol(G)$ if and only if for each $1\leq s\leq r$, we have
\begin{equation}\label{cong} S(k_s^\ell,i_s) \equiv i_s\pmmod{p_s^{n_s}}\mbox{ for all }i_s\in\bN_{\geq0}.\end{equation}
The order of $k_s$ mod $p_s^{n_s}$ is equal to $p_s^{n_s-m_s}$; this is an elementary arithmetic fact and requires the assumption that $m_s\geq 2$ if $p_s=2$. Thus, if $\ell$ is divisible by $p_s^{n_s-m_s}$, then $k_s^\ell\equiv1$ (mod $p_s^{n_s}$) and (\ref{cong}) clearly holds. Conversely, if (\ref{cong}) holds, then by taking $i_s=2$, we obtain $1+k_s^{\ell}\equiv 2$ (mod $p_s^{n_s})$, which implies that $p_s^{n_s-m_s}$ divides $\ell$. It then follows that $\ell= p_1^{n_1-m_1}\cdots p_r^{n_r-m_r}$ is the smallest natural number for which $\pi^\ell \in\Hol(G)$. This proves the claim.

\section{Construction of examples}\label{sec app}

In this section, suppose that $G$ is as in (\ref{G def}), and we shall use the notation defined prior to Theorem~\ref{thm}. For each $1\leq s\leq r$, also let $0\leq y_s\leq n_s$ be the integer such that $\psi_s$ has order $p_s^{y_s}$. 

\begin{prop}\label{criterion}Let $\theta\in \Aut(G)$. For each $1\leq r\leq s$, write $\theta(\tau_s) = c_s\tau_s^{\theta_s}$ as in $(\ref{theta})$, where $c_s\in A$ and $\theta_s\in \bZ$. If $\theta(A)= A$, then 
\[ \theta|_A\psi_s = \lambda(c_s)\rho(c_s)\psi_s^{\theta_s}\theta|_A\mbox{ in }\Aut(A).\]
If in addition $A$ is abelian and $\psi_s$ commutes with $\theta|_A$, then
\[ \psi_s^{\theta_s-1}=\mathrm{Id}_A\mbox{ and in particular }\theta_s\equiv1\pmmod{p_s^{y_s}}.\]
\end{prop}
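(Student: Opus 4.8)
The plan is to extract the stated relation from the single equation $\theta(\tau_s a \tau_s^{-1}) = \theta(\psi_s(a))$ and then exploit commutativity. First I would apply $\theta$ to the defining relation $\tau_s a \tau_s^{-1} = \psi_s(a)$ for $a \in A$. Using $\theta(\tau_s) = c_s \tau_s^{\theta_s}$ and the hypothesis $\theta(A) = A$ (so $\theta|_A \in \Aut(A)$ and $\theta(a) \in A$), the left-hand side becomes
\[
(c_s \tau_s^{\theta_s})\, \theta|_A(a)\, (c_s \tau_s^{\theta_s})^{-1}
= c_s\, \psi_s^{\theta_s}(\theta|_A(a))\, c_s^{-1}
= \bigl(\lambda(c_s)\rho(c_s)\,\psi_s^{\theta_s}\,\theta|_A\bigr)(a),
\]
where I use that conjugation by $\tau_s^{\theta_s}$ acts on $A$ as $\psi_s^{\theta_s}$, and that $\lambda(c_s)\rho(c_s)$ denotes the map $x \mapsto c_s x c_s^{-1}$ (consistent with the definitions of $\lambda,\rho$ in the introduction, noting $\rho(c_s): x \mapsto x c_s^{-1}$). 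The right-hand side is $\theta|_A(\psi_s(a)) = (\theta|_A \psi_s)(a)$. Since this holds for all $a \in A$, we get $\theta|_A \psi_s = \lambda(c_s)\rho(c_s)\,\psi_s^{\theta_s}\,\theta|_A$ in $\Aut(A)$, which is the first assertion.

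For the second assertion, assume $A$ is abelian and $\psi_s$ commutes with $\theta|_A$. When $A$ is abelian, $\lambda(c_s)\rho(c_s) = \mathrm{Id}_A$, so the relation collapses to $\theta|_A \psi_s = \psi_s^{\theta_s} \theta|_A$. Using that $\psi_s$ and $\theta|_A$ commute, the left side is $\psi_s \theta|_A$, hence $\psi_s \theta|_A = \psi_s^{\theta_s}\theta|_A$; cancelling the automorphism $\theta|_A$ on the right gives $\psi_s = \psi_s^{\theta_s}$, i.e. $\psi_s^{\theta_s - 1} = \mathrm{Id}_A$. Since $\psi_s$ has order $p_s^{y_s}$, this forces $p_s^{y_s} \mid \theta_s - 1$, that is, $\theta_s \equiv 1 \pmod{p_s^{y_s}}$.

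None of the steps is a serious obstacle; the proof is essentially a direct unwinding of definitions. The one point that needs a little care is the bookkeeping for how conjugation by $\tau_s^{\theta_s}$ acts on $A$ — namely that it is $\psi_s^{\theta_s}$, valid for $\theta_s$ a (possibly negative) integer because $\psi_s$ is invertible — and the clean vanishing of the inner-automorphism factor $\lambda(c_s)\rho(c_s)$ once $A$ is abelian, which is what makes the final cancellation legitimate.
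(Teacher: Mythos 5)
Your proof is correct and follows essentially the same route as the paper: apply $\theta$ to the relation $\tau_s a\tau_s^{-1}=\psi_s(a)$, identify conjugation by $c_s\tau_s^{\theta_s}$ with $\lambda(c_s)\rho(c_s)\psi_s^{\theta_s}$, and in the abelian commuting case cancel $\theta|_A$ to get $\psi_s^{\theta_s-1}=\mathrm{Id}_A$. No discrepancies worth noting.
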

\begin{proof} For any $a\in A$, we have $\tau_sa\tau_s^{-1}=\psi_s(a)$. Hence, if $\theta(A)= A$, then
\[ (\theta\psi_s)(a) = c_s\tau_s^{\theta_s}\theta(a)\tau_s^{-\theta_s}c_s^{-1} = (\lambda(c_s)\rho(c_s)\psi_s^{\theta_s}\theta)(a),\]
which yields the claimed equality in $\Aut(A)$. If in addition $A$ is abelian and $\psi_s$ commutes with $\theta|_A$, then
\[ \psi_s\theta|_A = \theta|_A\psi_s = \lambda(c_s)\rho(c_s)\psi_s^{\theta_s}\theta|_A = \psi_s^{\theta_s}\theta|_A,\]
which implies $\psi_s^{\theta_s-1} = \mathrm{Id}_A$, as desired.
\end{proof}

Condition (1) in Theorem~\ref{thm} implies $\theta(A)= A$ for all $\theta\in \Aut(A)$. From Proposition~\ref{criterion}, we then immediately deduce the following corollary.

\begin{cor}\label{cor} Suppose that condition $(1)$ in Theorem~$\ref{thm}$ holds, that $A$ is abelian, and $\psi_1,\dots,\psi_r$ lie in the center of $\Aut(A)$. If $n_s - m_s + \ep_s \leq y_s \leq m_s$ for all $1\leq s\leq r$, then conditions $(2)$ -- $(4)$ in Theorem~$\ref{thm}$ also hold. 
\end{cor}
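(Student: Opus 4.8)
The plan is to verify conditions (2), (3), and (4) of Theorem~\ref{thm} separately, exploiting the double inequality $n_s-m_s+\ep_s\leq y_s\leq m_s$ together with Proposition~\ref{criterion}.

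Conditions (2) and (4) are immediate book-keeping and do not even use that $A$ is abelian or that the $\psi_s$ are central. Indeed, $\psi_s$ has order $p_s^{y_s}$, and $y_s\leq m_s$ gives $p_s^{y_s}\mid p_s^{m_s}$, which is condition (2); and $n_s-m_s+\ep_s\leq y_s\leq m_s$ gives $n_s-m_s+\ep_s\leq m_s$, i.e. $n_s\leq 2m_s-\ep_s$, which is condition (4).

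The only substantive point is condition (3). Fix $\theta\in\Aut(G)$ and $1\leq s\leq r$, and write $\theta(\tau_s)=c_s\tau_s^{\theta_s}$ as in $(\ref{theta})$. As recorded just before the corollary, condition (1) forces $\theta(A)=A$: an element outside $A$ maps to a nontrivial element of $G/A\cong C_d$, hence has order divisible by some $p_t$, whereas every element of $A$ has order dividing $\exp(A)$, which is coprime to $d$; thus $A$ is characteristic in $G$. Since $A$ is abelian and $\psi_s$ lies in the center of $\Aut(A)$, it commutes with $\theta|_A$, so Proposition~\ref{criterion} applies and yields $\psi_s^{\theta_s-1}=\mathrm{Id}_A$. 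Because $\psi_s$ has order exactly $p_s^{y_s}$, this forces $\theta_s\equiv 1\pmod{p_s^{y_s}}$, and then the inequality $n_s-m_s+\ep_s\leq y_s$ gives $\theta_s\equiv 1\pmod{p_s^{n_s-m_s+\ep_s}}$, which is precisely condition (3).

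Since each step reduces to an already-established fact, I do not anticipate any real obstacle. The one place where hypothesis (1) is genuinely needed---rather than merely the numerical hypothesis on $y_s$---is in securing $\theta(A)=A$, so that Proposition~\ref{criterion} can be invoked for every $\theta\in\Aut(G)$; everything else is pure modular arithmetic.
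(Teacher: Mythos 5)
Your proof is correct and follows essentially the same route as the paper: conditions (2) and (4) are the numerical inequalities $y_s\leq m_s$ and $n_s-m_s+\ep_s\leq m_s$, and condition (3) comes from Proposition~\ref{criterion} once one notes that condition (1) forces $\theta(A)=A$ for every $\theta\in\Aut(G)$. Your explicit coprimality argument for $\theta(A)=A$ just spells out what the paper asserts without proof.
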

%\begin{proof}Note that condition (1) implies $\theta(A)\subset A$ for all $\theta\in \Aut(A)$. Hence, if $n_s - m_s + \ep_s \leq y_s \leq m_s$ for all $1\leq s\leq r$, then condition (3) follows from Proposition~\ref{criterion} to deduce, while conditions (2) and (4) are clear.\end{proof}

In view of Corollary~\ref{cor}, we may construct groups $G$ of the form (\ref{G def}) and choose $m_1,\dots,m_r$ for which all of conditions (1) -- (4) in Theorem~\ref{thm} hold, as follows.
\begin{enumerate}[1.]
\item Start with an abelian group $A$ of finite exponent $\exp(A)$.
\item Find $\psi_1,\dots,\psi_r$ in the center of $\Aut(A)$ of prime power orders $p_1^{y_1},\dots,p_r^{y_r}$, such that $p_1,\dots,p_r$ are distinct and none of them divides $\exp(A)$.
\item For each $1\leq s\leq r$, find $n_s,m_s\in\bN$ with $n_s-m_s+\ep_s\leq y_s\leq m_s\leq n_s$.
\item Take $d = p_1^{n_1}\cdots p_r^{n_r}$ and define $G$ to be as in (\ref{G def}).
\end{enumerate}
Then, from Corollary~\ref{cor}, we deduce that all of conditions (1) -- (4) in Theorem~\ref{thm} are satisfied. In particular, for the group $G$ defined as in (\ref{G def}), the quotient $T(G)$ has an element of order $p_1^{n_1-m_1}\cdots p_{r}^{n_r-m_r}$.

\vspace{1mm}

For each $\ell\in \bZ$, the $\ell$th power map 
\[ \pi_\ell : A\longrightarrow A;\hspace{1em}\pi_\ell(a) = a^\ell\]
on $A$ is a homomorphism when $A$ is abelian. Hence, for any abelian group $A$ of finite exponent $\exp(A)$, we have the subgroup
\[ \left\{\pi_\ell : \ell\in\bZ\mbox{ with }\gcd(\ell,\exp(A))=1\right\} \simeq \left(\frac{\bZ}{\exp(A)\bZ}\right)^\times \]
lying in the center of $\Aut(A)$. By taking $\psi_1,\dots,\psi_r$ to be these power maps in step 2, the method above then allows us to easily construct many examples of $G$ for which $T(G)$ has elements of odd order.

\section*{Acknowledgments} Research supported by ``the Fundamental Research Funds for the Central Universities'' (Award No.: 19lpgy247). The author thanks the editor and the referee for helpful comments.


\begin{thebibliography}{99}

%\bibitem{magma}

%\bibitem{SG}
%H. U. Besche, B. Eick, and E. A. O’Brien, \emph{A millennium project: constructing small groups}, Internat. J. Algebra Comput. 12 (2002), no. 5, 623--644.

\bibitem{Caranti1}
A. Caranti and F. Dalla Volta, \emph{The multiple holomorph of a finitely generated abelian group}, J. Algebra 481 (2017), 327--347.

\bibitem{Caranti2}
A. Caranti and F. Dalla Volta, \emph{Groups that have the same holomorph as a finite perfect group}, J. Algebra 507 (2018), 81--102.

\bibitem{Caranti3}
A. Caranti, \emph{Multiple holomorphs of finite $p$-groups of class two}, J. Algebra 516 (2018), 352--372.

%\bibitem{Childs book}
%L. N. Childs, \emph{Taming wild extensions: Hopf algebras and local Galois module theory}. Mathematical Surveys and Monographs, 80. American Mathematical Society, Providence, RI, 2000.

%\bibitem{GAP}

%\bibitem{Skew braces}
%L. Guarnieri and L. Vendramin, \emph{Skew braces and the Yang-Baxter equation}, Math. Comp. 86 (2017), no. 307, 2519--2534.

\bibitem{Kohl NHol}
T. Kohl, \emph{Multiple holomorphs of dihedral and quaternionic groups}, Comm. Algebra 43 (2015), no. 10, 4290--4304.

%\bibitem{book}
%G. A. Miller, H. F. Blichfeldt, and L. E. Dickson, \emph{Theory and applications of finite groups}. John Wiley \& Sons, New York, 1916.

\bibitem{Mills}
W. H. Mills, \emph{Multiple holomorphs of finitely generated abelian groups}, Trans. Amer. Math. Soc. 71 (1951), 379--392.

\bibitem{Miller}
G. A. Miller, \emph{On the multiple holomorphs of a group}, Math. Ann. 66 (1908), no. 1, 133--142.

%\bibitem{Skew}
%A. Smoktunowicz and L. Vendramin, \emph{On skew braces (with an appendix by N. Byott and L. Vendramin}, J. Comb. Algebra 2 (2018), no. 1, 47--86. 

\bibitem{Tsang NHol}
C. Tsang, \emph{On the multiple holomorph of a finite almost simple group}, New York J. Math. 25 (2019), 949--963.

\bibitem{Tsang squarefree}
C. Tsang, \emph{On the multiple holomorph of groups of squarefree or odd prime power order}, J. Algebra 544 (2020), 1--25.

\end{thebibliography}
\end{document}